\DeclareMathOperator{\Cent}{Cent}
\DeclareMathOperator{\Spec}{Spec}
\DeclareMathOperator{\Hom}{Hom\,}
\DeclareMathOperator{\PGL}{PGL}
\DeclareMathOperator{\lev}{lev}
\DeclareMathOperator{\rank}{rank}
\DeclareMathOperator{\Centr}{C}
\newtheorem{lem}{Lemma}
\newtheorem*{thm*}{Theorem}
\newtheorem{thm}{Theorem}
\newtheorem*{cor*}{Corollary}
\let\l\left
\let\r\right
\newcommand{\st}{\scriptstyle}
\newcommand{\ds}{\displaystyle}
\title{Centralizer of the elementary subgroup of an isotropic reductive group}
\author{E. Kulikova\thanks{The author is supported by RFBR 08-01-00756.},
A. Stavrova\thanks{The author is supported by RFBR 09-01-00878 and RFBR 09-01-90304.}}
\begin{document}
\selectlanguage{english}
\maketitle

\section{Introduction}

Let $R$ be a commutative ring with 1, and let $G$ be an isotropic reductive algebraic group over $R$.
In~\cite{PS} Victor Petrov and the second author introduced a notion of an elementary subgroup $E(R)$
of the group of points $G(R)$.

More precisely, assume that $G$ is isotropic in the following strong sense:
it possesses a parabolic subgroup that intersects properly any semisimple normal subgroup of $G$.
%whose type intersects properly each irreducible component of the root
%system of $G$.
% or: intersects properly each semisimple normal subgroup of $G$; or: does not contain
% any semisimple normal subgroup of $G$.
Such a parabolic subgroup $P$ is called {\it strictly proper}. Denote
by $E_P(R)$ the subgroup of $G(R)$ generated by the $R$-points of the unipotent
radicals of $P$ and of an opposite parabolic subgroup $P^-$.
The main theorem of~\cite{PS} states that $E_P(R)$
does not depend on the choice of $P$, as soon as
for any maximal ideal $M$ of $R$ all irreducible components of the relative
root system of $G_{R_M}$ (see~\cite[Exp. XXVI, \S 7]{SGA} for the definition) are of rank $\ge 2$.
Under this assumption, we call $E_P(R)$ the {\it elementary
subgroup} of $G(R)$ and denote it simply by $E(R)$.
In particular, $E(R)$ is normal in $G(R)$.
This definition of $E(R)$ generalizes the well-known definition of an elementary subgroup of a Chevalley group (or, more
generally, of a split reductive group), as well as several other definitions of an elementary subgroup
of isotropic classical groups and simple groups over fields.
The group $E(R)$ is also perfect
under natural assumptions on $R$~\cite{LS}. Here we continue this theme by proving that the centralizer of
$E(R)$ in $G(R)$ coincides with the group of $R$-points of the group scheme center
$\Cent(G)$ (see~\cite[Exp. I 2.3]{SGA} for the definition). Consequently, both these subgroups
also coincide with the abstract group center of $G(R)$. Our result extends the
respective theorem of E. Abe and J. Hurly for Chevalley groups~\cite{AbeHur}; see also~\cite[Lemma 2]{St}
for a slighly more general statement.

\begin{thm}\label{th:main}
Let $G$ be an isotropic reductive algebraic group over a commutative ring $R$ having a strictly proper
parabolic subgroup $P$.
Assume that for any maximal ideal $M$ of $R$ all irreducible components of the relative
root system of $G_{R_M}$ are of rank $\ge 2$.
Then $\Centr_{G(R)}(E(R))=\Cent(G)(R)=\Centr(G(R))$.
\end{thm}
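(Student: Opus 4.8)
The three groups sit in an evident chain $\Cent(G)(R)\subseteq\Centr(G(R))\subseteq\Centr_{G(R)}(E(R))$: the first inclusion holds because, by the definition of the scheme-theoretic centre, a point of $\Cent(G)(R)$ commutes with $G(R')$ for every $R$-algebra $R'$, in particular with $G(R)$; the second holds because $E(R)$ is a subgroup of $G(R)$. Thus the whole theorem reduces to the reverse inclusion $\Centr_{G(R)}(E(R))\subseteq\Cent(G)(R)$. Concretely, given $g\in G(R)$ centralizing $E(R)$, I must show that $g$ defines a point of the centre, i.e. that the inner automorphism $\mathrm{int}_g$ is the identity automorphism of the group scheme $G$ over $R$ (equivalently, $g$ centralizes $G(R')$ functorially).

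First I would fix the cocharacter $\lambda$ (or maximal split subtorus) attached to $P$, so that $P=P(\lambda)$, $P^-=P(-\lambda)$, the common Levi is $L=\Centr_G(\lambda)$, and $U_P$, $U_{P^-}$ are filtered by the relative root subgroups $U_\alpha$, each a vector group $V_\alpha$ on which $L$ acts $R$-linearly. The first substantive step is to show $g\in L(R)$: passing to a faithful $\lambda$-graded representation $G\hookrightarrow\GL(M)$, the hypothesis that $g$ commutes with both $U_P(R)$ and $U_{P^-}(R)$ forces the block matrix of $g$ to respect the weight grading, whence $g\in P(R)\cap P^-(R)=L(R)$. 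Once $g\in L(R)$, conjugation by $g$ preserves each $U_\alpha$ and acts on it as an $R$-linear automorphism of $V_\alpha$; since this automorphism fixes every element of $V_\alpha=U_\alpha(R)$, it is the identity map of the module $V_\alpha$, hence the identity of the scheme $U_\alpha$ after any base change. This linearity is exactly what lets me upgrade from the abstract group $E(R)$ to the scheme: for a vector group, fixing the $R$-points pointwise is equivalent to triviality of the (linear) action. I regard this passage from $R$-points to scheme as the main obstacle, since over an arbitrary commutative ring Zariski density of $U_P(R)$ fails and the naive ``fix all points $\Rightarrow$ fix the scheme'' argument is false; it is the $L$-linearity of the root modules that saves the day. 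Consequently $\mathrm{int}_g$ is trivial on $U_P$ and $U_{P^-}$ as $R$-group schemes.

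It remains to deduce that $\mathrm{int}_g$ is the identity on all of $G$. Here I would use the structural fact, supplied by the relative root datum of~\cite{PS} and where the rank $\ge2$ hypothesis enters, that an element of $L$ acting trivially (by conjugation) on every relative root subgroup is already central in $G$; equivalently, the representation of $L$ on $\bigoplus_\alpha U_\alpha\cong\Lie(U_P)$ has scheme-theoretic kernel exactly $\Cent(G)$. Granting this, the conclusion of the previous paragraph gives $g\in\Cent(G)(R)$ directly. Alternatively, and more robustly, one notes that $E(R)$ contains the ``elementary Levi'' elements arising from commutators $[U_\alpha(R),U_{-\alpha}(R)]\subseteq L(R)$, so that $g$ also centralizes these; combined with $g\in L(R)$ and the rank $\ge2$ commutator formula, this trivializes $\mathrm{int}_g$ on $L$ as well, the delicate case being a Levi with nontrivial anisotropic kernel, where one must invoke the precise module structure of the $U_\alpha$ rather than mere generation. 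Finally, since the fibres of a reductive group are connected and the big cell $U_{P^-}\,L\,U_P$ is an open subscheme, the subgroups $U_{P^-}$, $L$, $U_P$ generate $G$; as $\mathrm{int}_g$ is a homomorphism of group schemes restricting to the identity on each of them, $\mathrm{int}_g=\mathrm{id}_G$. Hence $g\in\Cent(G)(R)$, which yields the reverse inclusion and completes the proof.
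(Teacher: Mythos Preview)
Your proposal has a genuine gap at the step claiming $g\in L(R)$. You assert that in a faithful $\lambda$-graded representation $G\hookrightarrow\GL(M)$, commuting with $U_P(R)$ and $U_{P^-}(R)$ ``forces the block matrix of $g$ to respect the weight grading.'' This is not justified: the image of $U_P$ is a proper closed subscheme of the block-upper-unipotent subgroup of $\GL(M)$, so centralizing only its $R$-points is a far weaker condition than centralizing all block-unipotents, and nothing forces $g$ to be block-diagonal. You yourself observe a few lines later that over a general ring ``fix all $R$-points $\Rightarrow$ fix the scheme'' can fail---yet that is exactly the inference this step relies on. Note too that your argument here makes no use of the rank $\ge 2$ hypothesis, which should be a warning sign.

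In the paper this step is the technical heart, occupying Lemmas~\ref{lem:omega} and~\ref{lem:levi}. One first localizes and passes to an fpqc cover splitting $G$; over the residue field the Bruhat decomposition gives $g\in L$, and openness of the big cell $U_PLU_{P^-}$ lifts this to $g\in U_P(M)L(R)U_{P^-}(M)$ over the local ring. Then a substantial combinatorial argument with the generalized Chevalley commutator formula---this is where rank $\ge 2$ enters, to produce an auxiliary simple relative root $B$ with $A+B\in\Phi_P$---kills the two unipotent factors. Two smaller issues in your sketch: the claim that for $g\in L(R)$ conjugation acts $R$-linearly on each $V_\alpha$ is not correct when $2\alpha\in\Phi_P$ (type $BC$), since $gX_\alpha(v)g^{-1}$ then involves $X_{2\alpha}$-terms as well; the paper handles this by descending induction on height (Lemma~\ref{lem:levigood}). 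And the ``structural fact'' that the scheme-theoretic kernel of $L$ on $\bigoplus_\alpha V_\alpha$ equals $\Cent(G)$ is asserted without proof; the paper instead bootstraps by repeating the whole argument with a Borel subgroup of the now-split group to land $g$ in a split maximal torus, and then reads off the conclusion from the character lattice.
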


Observe that the condition of the theorem ensures that
the elementary subgroup $E(R)$ of $G(R)$ is correctly defined.
We refer to~\cite{LS} for its definition and basic properties, as well as for the preliminaries on relative root
subschemes.

{\bf Remark.} One may ask if the statement holds for $E_P(R)$ instead of $E(R)$, if we do not assume that the local relative
rank is at least 2. This seems to hold always except for several natural exceptions, similar to the exception
for $\PGL_2$ described in~\cite{AbeHur}. We plan to address this case in the near future.

\section{Preliminary lemmas}

We refer to~\cite{LS} and~\cite{PS} for the preliminaries and notation.

We include the following obvious lemma for the sake of completeness.

\begin{lem}\label{lem:closedpt}
Let $X=\Spec A$ be an affine scheme over $Y=\Spec R$, and let $Z$ be a closed subscheme of $X$. Take $g\in X(R)$.
Then $g\in Z(R)$ if and only if $g\in Z(R_M)$ for any maximal ideal $M$ of $R$.
\end{lem}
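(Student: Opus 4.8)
The plan is to reduce this scheme-theoretic statement to an elementary fact in commutative algebra. Since $Z$ is a closed subscheme of the affine scheme $X=\Spec A$, we may write $Z=\Spec(A/I)$ for a uniquely determined ideal $I\subseteq A$. An $R$-point $g\in X(R)$ is the same datum as an $R$-algebra homomorphism $\phi\colon A\to R$, and $g$ factors through the closed immersion $Z\hookrightarrow X$ exactly when $\phi$ kills $I$; that is,
$$g\in Z(R)\quad\Longleftrightarrow\quad \phi(a)=0\ \text{for all}\ a\in I.$$
For a maximal ideal $M$ the image of $g$ under $X(R)\to X(R_M)$ corresponds to the composite $A\xrightarrow{\phi}R\to R_M$, so by the same reasoning $g\in Z(R_M)$ holds if and only if $\phi(a)$ maps to $0$ in $R_M$ for every $a\in I$. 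The forward implication is then immediate, since if $\phi(a)=0$ in $R$ its image in each localization $R_M$ vanishes.

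For the converse I would invoke the standard local--global principle that the canonical map $R\to\prod_M R_M$ is injective. Concretely, fix $a\in I$ and set $r=\phi(a)$; assuming $r$ maps to $0$ in $R_M$ for all $M$, I claim $r=0$. If $r\neq 0$, its annihilator $\mathrm{Ann}(r)=\{s\in R: sr=0\}$ is a proper ideal and hence lies in some maximal ideal $M$. But $r/1=0$ in $R_M$ would produce some $s\notin M$ with $sr=0$, i.e.\ $s\in\mathrm{Ann}(r)\subseteq M$, a contradiction. Thus $r=\phi(a)=0$, and letting $a$ range over $I$ yields $g\in Z(R)$.

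The lemma is genuinely elementary --- the authors themselves call it obvious --- so I do not anticipate a real obstacle. The only point requiring a little care is the translation in the first paragraph: one must interpret the membership $g\in Z(R_M)$ as the image of $g$ under $X(R)\to X(R_M)$ lying in $Z(R_M)$, and must check that the closed subscheme structure is faithfully captured by the single ideal $I$. Granting this, the entire statement rests on the injectivity of $R\to\prod_M R_M$, which is the heart of the matter.
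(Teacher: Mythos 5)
Your proof is correct and follows essentially the same route as the paper: both translate $g\in Z(R)$ into the vanishing of $\phi(I)$ (equivalently, an inclusion of ideals) and then reduce everything to the injectivity of the localization map $R\to\prod_M R_M$. The only difference is cosmetic: the paper cites the module-theoretic version $V\to\prod_M V\otimes R_M$ from Waterhouse, while you prove the special case $V=R$ by hand with the standard annihilator argument, which is perfectly adequate here.
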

\begin{proof}
For any $R$-module $V$, the natural map $V\to \prod V\otimes R_M$, where the product runs over all maximal
ideals $M$ of $R$, is injective (e.g.~\cite[p. 104, Lemma]{Wat}). Since $g\in Z(R)$ is equivalent
to an inclusion between the respective ideals of $A$ which are $R$-modules, the Lemma holds.
\end{proof}

\begin{lem}\label{lem:R_P}
Let $R$ be any commutative ring, $G$ an isotropic reductive group over $R$, $P$ a strictly proper parabolic subgroup of $G$.
Take any maximal ideal $M$ of $R$ and any strictly proper parabolic subgroup $P'$ of $G_{R_M}$ contained in $P_{R_M}$.
Then for any $A\in\Phi_{P'}$
there is a system of generators ${e_A}_i$, $1\le i\le n_A$, of the $R_M$-module $V_A$ such that
for all $g$ in the image of $\Cent_{G(R)}(E_P(R))$ in $G(R_M)$, one has $[g,X_A({e_A}_i)]=1$,
$1\le i\le n_A$.
\end{lem}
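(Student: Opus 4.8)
The plan is to understand what Lemma~\ref{lem:R_P} is really asking. We have an element $g$ centralizing the whole elementary subgroup $E_P(R)$ over the base ring $R$. After localizing at a maximal ideal $M$, we can pass to a smaller strictly proper parabolic $P'$ of $G_{R_M}$, and we want to show that $g$ (viewed in $G(R_M)$) commutes with the elementary root unipotents $X_A(v)$ for a suitable finite generating set $\{{e_A}_i\}$ of each relative root module $V_A$. The subtlety is that $g$ is only known to centralize $E_P$, the elementary subgroup attached to the original, possibly larger parabolic $P$, whereas $P'$ is a finer parabolic whose root subschemes $X_A$, $A\in\Phi_{P'}$, need not lie in $E_P(R_M)$ individually; only certain products (the coarser root elements of $P$) are guaranteed to.

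The key structural input I would use is the relationship between the relative root subschemes of $P$ and those of the refined parabolic $P'\subseteq P_{R_M}$ developed in~\cite{LS,PS}. First I would recall that each $\Phi_P$-root subscheme $X_B$ ($B\in\Phi_P$) factors, after base change to $R_M$, through the root subschemes $X_A$ for those $A\in\Phi_{P'}$ lying over $B$, together with commutator corrections of higher level; this is the standard ``levi decomposition'' of the unipotent radical of $P_{R_M}$ along $P'$. Second, since $g$ centralizes $X_B(v)$ for all $v\in V_B$ and all $B\in\Phi_P$, I would try to exploit that $g$ normalizes each $P_{R_M}$ and its opposite, so conjugation by $g$ preserves the unipotent radicals $\UU_{P'}$ and $\UU_{P'}^-$ and acts on them filtration-by-filtration. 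The goal is to promote the commutation with the coarse $X_B$ to commutation with the finer $X_A$ on a generating set.

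The concrete mechanism I expect to carry this out is an induction on the \emph{level} of the roots $A\in\Phi_{P'}$ (recall $\lev$ is the relative-root level function). For $A$ of top level the root subscheme $X_A$ already coincides, up to the action of the Levi, with a piece of some coarse $X_B$, so commutation is inherited directly. For lower levels, one writes $X_A(v)$ as a commutator of higher-level elements or extracts it from $X_B(w)$ by projecting onto the appropriate eigencomponent under a cocharacter of $P'$ not seen by $P$; since $g$ commutes with all the ingredients, it commutes with the extracted factor. Because each $V_A$ is a finitely generated $R_M$-module, one can choose the generators $\{{e_A}_i\}$ to be exactly the finitely many vectors arising from this extraction, which is why the statement is phrased for a \emph{system of generators} rather than for all of $V_A$.

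The hard part will be making the extraction of the finer root elements from the coarser ones rigorous and uniform: the commutator formulas in~\cite{LS} relate $X_A$ and $X_B$ only up to terms supported on strictly higher roots, so one must organize the induction so that the ``error terms'' are already handled by the inductive hypothesis, and must verify that the chosen generators ${e_A}_i$ genuinely generate $V_A$ as an $R_M$-module and not merely a submodule. A secondary technical point is to confirm that the image of $\Cent_{G(R)}(E_P(R))$ in $G(R_M)$ really does normalize $P'$ and its opposite, so that the filtration argument applies; this should follow from $g$ centralizing $E_P(R)\supseteq E_P(R_M$-relevant part$)$ and hence normalizing the Levi structure, but it is the step I would check most carefully.
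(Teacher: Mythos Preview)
Your proposal has the right instinct that the relationship between the coarse root subschemes $X_B$ ($B\in\Phi_P$) and the fine ones $X_A$ ($A\in\Phi_{P'}$) is the crux, but the argument is oriented in the wrong direction and misses the decisive step.

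You propose to decompose coarse $X_B$ in terms of fine $X_A$'s and then peel off the fine factors by a downward induction on level. This does not work as stated. First, ``writing $X_A(v)$ as a commutator of higher-level elements'' is backwards: commutators of root elements raise level, they do not produce lower-level factors. Second, even granting the decomposition $X_B(w)=\prod_A X_A(w_A)\cdot(\text{higher})$, you only know $[g,X_B(w)]=1$ for $w$ in the image of $V_B$ in $V_B\otimes_R R_M$; you cannot set all but one component $w_A$ to zero unless that particular vector happens to come from $V_B$, and there is no reason it should. The cocharacter-projection idea fails for the same reason: $g$ has no a priori equivariance with respect to a cocharacter of $P'$ that is invisible to $P$. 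Finally, the claim that $g$ normalizes $P'$ and its opposite is neither needed nor justified.

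What the paper actually does is the reverse: it invokes \cite[Lemma~12]{PS}, which for $Q=P'\subseteq Q'=P_{R_M}$ expresses each \emph{fine} element $X_A(\eta^k v)$ directly as a product of \emph{coarse} elements $X_{B_i}(\eta^{n_i}v_i)$ with $B_i\in\Phi_P$ and $v_i\in V_{B_i}\otimes R_M$. The whole proof then reduces to a denominator-clearing trick: choose $s_i\in R\setminus M$ with $s_iv_i$ in the image of $V_{B_i}$, set $\eta=\prod s_i$, and conclude that $X_A(\eta^k v)\in E_P(R)$, hence is centralized by $g$. Since $\eta$ is a unit in $R_M$, the vectors $\eta^k v$ still generate $V_A$. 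This localisation step is exactly why the lemma only asserts commutation on a \emph{generating set} rather than on all of $V_A$, a point your outline does not account for.
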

\begin{proof}
We assume from the very beginning that we have passed to a member of the disjoint union
$$
\Spec(R)=\coprod\limits_{i=1}^m\Spec(R_i),
$$
so that the parabolic subgroup $P$ is also provided with a relative root system $\Phi_P$ and corresponding relative
root subschemes. Since for any $B\in\Phi_P$ elements of $V_B$ generate $V_B\otimes_R R_M$ as an $R_m$-module,
the claim of the lemma holds if $P'=P_{R_M}$.

By~\cite[Lemma 12]{PS}, for any two strictly proper parabolic subgroups
$Q\le Q'$ of a reductive group scheme, one can find such $k>0$ depending only on  $\rank\Phi_Q$, that for any relative root
$A\in\Phi_Q$ and any $v\in V_A$ there exist relative roots
$B_i,C_{ij}\in\Phi_{Q'}$, elements $v_i\in V_{B_i}$,  $u_{ij}\in V_{C_{ij}}$, and integers
$k_i,n_i,l_{ij}>0$ {\rm ($1\le i\le m$, $1\le j\le m_j$)},
%{\rm ( $1\le i,j\le m$ )},
which satisfy the equality
$$
X_A(\xi\eta^k v)=\prod\limits_{i=1}^m X_{B_i}(\xi^{k_i}\eta^{n_i} v_{i})^{\ds\mbox{$\prod\limits_{j=1}^{m_i}$}
X_{C_{ij}}(\eta^{l_{ij}} u_{ij})},
$$
where $\xi,\eta$ are free variables. Taking $Q=P'$, $Q'=P_{R_M}$, $\xi=1$,
for any element $v_i$ of a generating system
of the $R_m$-module $V_A$ we get a decomposition
$$
X_A(\eta^k v)=\prod\limits_{i=1}^m X_{B_i}(\eta^{n_i} v_i),
$$
for some $B_i\in\Phi_P$ and $v_i\in V_{B_i}\otimes R_M$, $n_i>0$. Clearly, for any $v_i$ there is an element
$s_i\in R\setminus M$ such that $s_i v_i$ belongs to $V_{B_i}$ (strictly speaking, to the
image of $V_{B_i}$ in $V_{B_i}\otimes R_M$ under the localisation homomorphism; here and below we allow ourselves
this freedom of speech). Set $\eta=s_1\ldots s_m$. Then $X_A(\eta^kv)\in E_P(R)$, and hence $[g,X_A(\eta^kv)]=1$
for any $g\in \Cent_{G(R)}(E_P(R))$.
Thus, multiplying the elements of a generating system of $V_A$ by certain invertible elements of $R_M$, we
obtain a new generating system of $V_A$, which is centralized by $\Cent_{G(R)}(E_P(R))$.
\end{proof}

\begin{lem}\label{lem:omega}
Let $R$ be a local ring (in particular, $R$ can be a field) with the maximal ideal $M$, and let $G$
be a split reductive group over $R$. Let
$P$ be a parabolic subroup of $G$ such that $\rank \Phi_P\ge 2$. Assume that $g\in G(R)$ is such that
for any $A\in\Phi_P$ there is a system of generators ${e_A}_i$, $1\le i\le n_A$, of $V_A$ such that $[g,X_A({e_A}_i)]=1$ for all $i$.
Then $g\in U_P(M)L(R)U_{P^-}(M)$, where $U_{P^\pm}(M)=\l<X_A(MV_A),\ A\in\Phi^\pm\r>$.
\end{lem}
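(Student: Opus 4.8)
The plan is to reduce the statement to a computation over the residue field $k=R/M$ and then to prove, in the field case, that the reduction $\bar g$ of $g$ under $G(R)\to G(k)$ lies in $L(k)$. Granting for the moment that $\bar g\in L(k)$, the rest is formal: the multiplication map $U_P\times L\times U_{P^-}\to G$ is an open immersion onto the big cell $\Omega$, so $L(k)\subseteq\Omega(k)$; since $R$ is local, the preimage of $\Omega$ under $g\colon\Spec R\to G$ is an open subset of $\Spec R$ containing the closed point, hence equals all of $\Spec R$ (the unique open set containing the closed point of a local scheme), so $g\in\Omega(R)$. Writing $g=u_+lu_-$ with $u_\pm\in U_{P^\pm}(R)$, $l\in L(R)$ and reducing modulo $M$, the uniqueness of the big-cell factorization together with $\bar g\in L(k)$ forces $\bar u_+=\bar u_-=1$. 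Thus each $u_\pm$ lies in the kernel of $U_{P^\pm}(R)\to U_{P^\pm}(k)$; using the filtration of $U_{P^\pm}$ by relative root levels and the additivity of the $X_A$ modulo higher levels, this kernel is exactly $U_{P^\pm}(M)=\langle X_A(MV_A)\rangle$, which gives $g\in U_P(M)L(R)U_{P^-}(M)$.

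So the heart of the matter is the field case: if $g\in G(k)$ centralizes a generating family $X_A(e_{A,i})$ of each $V_A$ ($A\in\Phi_P$), then $g\in L(k)$. Since $L$ is closed and defined over $k$ and $g\in G(k)$, I may extend scalars to $\bar k$ and argue there. I would prove $g\in L=P\cap P^-$ by showing that $g$ normalizes both $U_P$ and $U_{P^-}$ and then invoking $N_{G}(U_{P^{\pm}})=P^{\pm}$. Over a field of characteristic zero (and, more generally, over a sufficiently large field) this is clean: from $gX_A(e_{A,i})g^{-1}=X_A(e_{A,i})$ one gets $\mathrm{Ad}(g)\,\hat e_{A,i}=\hat e_{A,i}$ for the nilpotent $\hat e_{A,i}\in\mathfrak g_A$ corresponding to $e_{A,i}$ (take logarithms of unipotents); as the $e_{A,i}$ span $V_A=\mathfrak g_A$ and $\mathrm{Ad}(g)$ is $k$-linear, $\mathrm{Ad}(g)$ fixes $\mathfrak u_{P^\pm}=\bigoplus_{A\in\Phi^\pm}V_A$ pointwise, whence $g$ centralizes $U_{P^\pm}$ and in particular $g\in L$.

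The main obstacle is positive characteristic over a small residue field, where the argument above breaks down: the subgroup $\langle X_A(e_{A,i})\rangle$ generated by the given elements is no longer Zariski dense in $U_{P^\pm}$ (for instance $\overline{\langle X_A(e)\rangle}$ is finite when $k=\mathbb F_p$), and centralizing the group element $X_A(e)$ no longer forces $\mathrm{Ad}(g)$ to fix $\hat e\in\mathfrak g_A$. This is precisely where the hypothesis $\rank\Phi_P\ge2$ is indispensable. Here I would exploit the Chevalley-type commutator relations: choosing the $e_{A,i}$ to be a $k$-basis of each $V_A$ and forming iterated commutators $[X_A(e_{A,i}),X_B(e_{B,j})]$ across two independent relative root directions produces root elements in $V_{A+B}$ (when $A+B\in\Phi_P$) built from a spanning set of bracket products; running through the rank-$\ge2$ subsystem one shows that $g$ centralizes a finite subgroup of Lie type in $G$ whose centralizer in $G(\bar k)$ is contained in $L(\bar k)$ — the same irreducibility phenomenon underlying the $\PGL_2$-type exceptions of~\cite{AbeHur}, which are absent exactly when the relative rank is at least two. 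Carrying out this commutator bookkeeping uniformly in the characteristic, and verifying that the resulting centralizer really lands in $L$, is the step I expect to demand the most care.
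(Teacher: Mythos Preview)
Your reduction from the local ring to the residue field via the open big cell $\Omega_P=U_PLU_{P^-}$ is correct and is exactly what the paper does. The divergence, and the gap, is in the field case.

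In characteristic zero your Lie-algebra argument is essentially sound (with the caveat that $\log X_A(e_{A,i})$ equals $\hat e_{A,i}$ only modulo $\bigoplus_{j\ge 2}\mathfrak g_{jA}$, so one needs a short downward induction on level to see that these logarithms still span $\mathfrak u_{P^\pm}$). But in positive characteristic you have no proof: you correctly observe that the $\mathrm{Ad}$-argument fails, and the proposed replacement --- that $g$ centralizes some finite subgroup of Lie type whose centralizer lies in $L$ --- is a program, not an argument. You give neither a construction of such a subgroup from the sparse data $\{X_A(e_{A,i})\}$ nor a reason its centralizer should sit inside $L$. Worse, your diagnosis that ``this is precisely where the hypothesis $\rank\Phi_P\ge 2$ is indispensable'' is mistaken: the paper's proof of this lemma does not use that hypothesis at all. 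Over $\bar k$ the paper writes $g=uhwv$ in Bruhat form; if the Weyl representative $w$ does not lie in $L$, there is a simple absolute root $\alpha$ outside $L$ with $w(\alpha)<0$, and choosing $e_A$ (with $A=\pi(\alpha)$) from the given generating set so that the $x_\alpha$-component of $X_A(e_A)$ is nonzero, one compares the $U_w^+$-factor of $X_A(e_A)\cdot g$ with that of $g\cdot X_A(e_A)$ and gets a contradiction from uniqueness of the Bruhat decomposition. Once $w\in L$, one concludes $g\in P\cap P^-=L$. This is uniform in the characteristic and needs only relative rank $\ge 1$; the rank $\ge 2$ hypothesis is genuinely used only in the subsequent lemma that upgrades $g\in U_P(M)L(R)U_{P^-}(M)$ to $g\in L(R)$.
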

\begin{proof}
First let $R$ be a field. We need to show that $g\in L(R)$. We can assume that $R$ is algebraically closed without
loss of generality.  Let $B^\pm$ be opposite Borel subgroups of $G$ contained in $P^\pm$,
$U^\pm$ be their unipotent radicals, and $T$ their common maximal torus.
Bruhat decomposition implies that $g=uhwv$, where
$u\in U^+(R)$, $h\in T(R)$, $w$ is a representative of the Weyl group,
$v\in U_w^+(R)=\{x\in U^+(R)\ |\ w(x)\in U^-(R)\}$, and this decomposition is unique.
We have $w\in L(R)$ if and only if
$w$ is a product of elementary reflections $w_{\alpha_i}$ for some simple roots $\alpha_i$ belonging to the root system of $L$.

Assume first that $w\not\in L$. Then there is a simple root $\alpha$ not belonging to the root system of $L$ such that
$w(\alpha)<0$. Consider $A=\pi(\alpha)$. Let $e_A\in V_A$ be a vector from the generating set existing by the
hypothesis of the Lemma such that $x_{\alpha}(\xi)$, $\xi\neq 0$, occurs in the canonic decomposition of $x=X_A(e_A)$
into a product of elementary root unipotents from $U^+$. Since $[g,x]=1$, we have $x(uhwv)=(uhwv)x$.
The rightmost factor in the Bruhat decomposition of $x(uhwv)=(xu)hwv$ equals $v$. However,
since $\alpha$ is a positive root of minimal height, it is clear that the rightmost factor in the Bruhat decomposition
of $(uhv)x$ contains  $x_\alpha(\eta+\xi)$ in its canonic decomposition, if $v$ contains $x_\alpha(\eta)$. Therefore,
this rightmost factor is distinct from $v$, a contradiction.

Therefore, $w\in L(R)$. Then for any $x\in U_P(R)$ we have $wxw^{-1}\in U_P(R)$,
hence by the definition of the Bruhat decomposition $v\in L(R)\cap U^+(R)$. This means that
$g=uhwv\in U^+(R)L(R)=U_P(R)(U^+(R)\cap L(R))L(R)=U_P(R)L(R)=P(R)$. Since symmetric reasoning implies
that $g\in P^-(R)$, we have $g\in P(R)\cap P^-(R)=L(R)$.

Now let $R$ be any local ring. Recall that $\Omega_P=U_PLU_{P^-}\cong U_P\times L\times U_{P^-}$ is a principal open subscheme of $G$
(e.g.~\cite[p. 9]{Ma}). Therefore, if the image of $g\in G(R)$ under the natural homomorphism
$G(R)\to G(R/M)$ is in $\Omega_P(R/M)$, then $g\in \Omega_P(R)$. Since by the above
the image of $g$ is in $L(R/M)$, and $\ker(U_{P^\pm}(R)\to U_{P^\pm}(R/M))=U_{P^\pm}(M)$,
we have $g\in U_P(M)L(R)U_{P^-}(M)$.
\end{proof}

\begin{lem}\label{lem:A-A}
Let $G$ be an isotropic reductive group over a local ring $R$, $M$ the maximal ideal of $R$,
$P$ a parabolic subgroup of $G$, $P^-$ an opposite parabolic subgroup.
%Let $\Phi_P=\{-A,A\}$. For any $u\in MV_{-A}$, $v\in V_A$ there exist $v'\in V_A$, $u'\in MV_{-A}$, and
%$b\in L(R)$ such that
%$$
%X_{-A}(u)X_A(v)=X_A(v')bX_{-A}(u').
%$$
%\end{lem}
%\begin{proof}
%This can be deduced from far more general results of~\cite{Loos79}. However, in our particular case it
%can be proved directly. The image of $x=X_{-A}(u)X_A(v)$ under $p:G(R)\to G(R/M)$ equals to $X_A(v)$,and thus belongs
%to $\Omega_P(R/M)$, where $\Omega_P=U_PLU_{P^-}$. Since $\Omega_P$ is an open subscheme of $G$, this implies that
%$x\in\Omega_P(R)$, that is, $x=X_A(v')bX_{-A}(u')$. Since the image of $u'$ under $p$ is trivial, $u'\in MV_{-A}$.
%\end{proof}
For any $u\in U_{P^-}(M)$, $v\in U_P(R)$ there exist $u'\in U_{P^-}(M)$, $v'\in U_P(R)$, and $b\in L(R)$
such that $uv=v'bu'$.
\end{lem}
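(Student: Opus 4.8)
The plan is to exploit the big-cell structure of $G$ together with the locality of $R$, in exactly the same spirit as the final paragraph of Lemma~\ref{lem:omega}. Recall that $\Omega_P=U_PLU_{P^-}\cong U_P\times L\times U_{P^-}$ is a principal open subscheme of $G$, so that multiplication gives a \emph{unique} factorization of every point of $\Omega_P(S)$ for any $R$-algebra $S$. I would begin by reducing the product $uv$ modulo the maximal ideal $M$. Since $u\in U_{P^-}(M)=\ker\bigl(U_{P^-}(R)\to U_{P^-}(R/M)\bigr)$, the image of $u$ in $G(R/M)$ is trivial, so the image of $uv$ equals the image $\bar v$ of $v$, which lies in $U_P(R/M)\subseteq\Omega_P(R/M)$.

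Next, because $\Omega_P$ is principal open and $R$ is local, the membership of the reduction of $uv$ in $\Omega_P(R/M)$ forces $uv\in\Omega_P(R)$, by the very openness argument used at the end of Lemma~\ref{lem:omega}. Hence there exist $v'\in U_P(R)$, $b\in L(R)$, and $u'\in U_{P^-}(R)$ with $uv=v'bu'$. This already establishes the desired shape of the factorization, with $v'$ and $b$ in the required subgroups; the only remaining point is to sharpen the membership of the last factor from $U_{P^-}(R)$ to $U_{P^-}(M)$.

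For this I would reduce the identity $uv=v'bu'$ modulo $M$ once more. Over the residue field $R/M$ the big-cell factorization is unique, and the reduction of $uv$ is $\bar v\in U_P(R/M)$, whose decomposition in $U_P(R/M)\,L(R/M)\,U_{P^-}(R/M)$ is simply $\bar v\cdot 1\cdot 1$. Matching the $U_{P^-}$-factors gives $\bar{u'}=1$, that is, $u'\in\ker\bigl(U_{P^-}(R)\to U_{P^-}(R/M)\bigr)=U_{P^-}(M)$, as required. The proof is thus a pure reduction argument, and its only nontrivial ingredient is the step promoting mod-$M$ membership in $\Omega_P$ to membership over $R$; since this is precisely the principal-openness fact already available from Lemma~\ref{lem:omega}, I do not expect any genuine obstacle.
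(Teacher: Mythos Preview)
Your argument is correct and is essentially identical to the paper's own proof: reduce $uv$ modulo $M$ to land in $\Omega_P(R/M)$, lift via principal openness of $\Omega_P$ to get $uv=v'bu'$ in $\Omega_P(R)$, and then compare the $U_{P^-}$-factors modulo $M$ to conclude $u'\in U_{P^-}(M)$. The paper states this more tersely but follows exactly the same steps.
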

\begin{proof}
The image of $x=uv$ under $p:G(R)\to G(R/M)$ equals $p(v)$,and thus belongs
to $\Omega_P(R/M)$, where $\Omega_P=U_PLU_{P^-}$. Since $\Omega_P$ is a principal open subscheme of $G$, this implies that
$x\in\Omega_P(R)$, that is, $x=v'bu'$. Since $p(u')=1$, we have $u'\in U_{P^-}(M)$.
\end{proof}

%We say that a system of relative roots $\Phi_P$ of a parabolic subgroup $P$ is {\it reduced}, if if for any $A\in\Phi_P$
%the only multiples of $A$ in $\Phi_P$ are $\pm A$. We say that $\Phi_P$ is of type $A_n$--$G_2$ if
%$\Phi_P$ is isomorphic to the root system of respective type as a set with two partially defined
%operations---addition and multiplication by integers.

\begin{lem}\label{lem:ABe}
Let $G$ be a reductive group over a commutative ring $R$, $P$ a parabolic subgroup of $G$, $A,B\in\Phi_P$ two
non-proportional relative roots
such that $A+B\in\Phi_P$. Assume that $A-B\not\in\Phi_P$, or $A,B$ belong to the image of a simply laced
 irreducible component of the absolute root system of $G$. Take $0\neq u\in V_B$. Any generating system $e_1,\ldots,e_n$ of the $R$-module
$V_A$ contains an element $e_i$ such that $N_{AB11}(e_i,u)\neq 0$.
\end{lem}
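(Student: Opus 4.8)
The plan is to exploit that $N_{AB11}$ is $R$-bilinear (it is the degree-$(1,1)$ term of the commutator formula), so that the statement is equivalent to the assertion that the $R$-linear map $N_{AB11}(-,u)\colon V_A\to V_{A+B}$ is nonzero whenever $u\neq 0$. Indeed, if $N_{AB11}(e_i,u)=0$ for every member $e_i$ of a generating system, then $N_{AB11}(v,u)=\sum_i r_i N_{AB11}(e_i,u)=0$ for all $v=\sum_i r_i e_i\in V_A$; conversely, a nonzero $R$-linear map cannot vanish on all generators. Thus I would first reduce to showing that $u\neq 0$ forces $N_{AB11}(-,u)\neq 0$.

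Next I would pass to the split case by a faithfully flat base change. Since $G$ is reductive, there is a faithfully flat (indeed \'etale) extension $R\to S$ with $G_S$ split and $P_S$ a parabolic of $G_S$. The map $V_B\to V_B\otimes_R S$ is injective, so $u$ remains nonzero over $S$, while the identical vanishing of $N_{AB11}(-,u)$ is preserved. Over $S$ the relative root subschemes split into absolute root spaces, $V_A\otimes S=\bigoplus_{\pi(\alpha)=A}\mathfrak g_\alpha$ and similarly for $B$ and $A+B$, where $\pi\colon\Phi\to\Phi_P\cup\{0\}$ is the projection; the leading term $N_{AB11}$ is then given by the Chevalley structure constants, i.e.\ for a root vector $e_\alpha$ and $u=\sum_{\pi(\beta)=B}u_\beta e_\beta$ one has $N_{AB11}(e_\alpha,u)=\sum_{\pi(\beta)=B,\ \alpha+\beta\in\Phi}N_{\alpha\beta}\,u_\beta\,e_{\alpha+\beta}$. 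I would invoke the structural results of~\cite{LS} for this identification.

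The key point is that testing on a single root vector $v=e_\alpha$ kills all cancellation: distinct $\beta$ contribute to distinct root spaces $\mathfrak g_{\alpha+\beta}$, so the $\mathfrak g_{\alpha+\beta_0}$-component of $N_{AB11}(e_\alpha,u)$ is exactly $N_{\alpha\beta_0}u_{\beta_0}$. Hence, after fixing $\beta_0$ with $u_{\beta_0}\neq 0$, it suffices to produce $\alpha$ with $\pi(\alpha)=A$, $\alpha+\beta_0\in\Phi$, and $N_{\alpha\beta_0}$ a unit. For the existence of $\alpha$ I would use that $A+B\in\Phi_P$ provides some $\alpha_1+\beta_1\in\Phi$ with $\pi(\alpha_1)=A$, $\pi(\beta_1)=B$, together with transitivity of the Weyl group $W_L$ of the Levi on the fiber $\pi^{-1}(B)$; since $W_L$ fixes $\pi$, choosing $w\in W_L$ with $w\beta_1=\beta_0$ gives $\alpha=w\alpha_1$, whence $\alpha+\beta_0=w(\alpha_1+\beta_1)\in\Phi$.

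Finally I would check $N_{\alpha\beta_0}=\pm 1$, which is exactly where the two hypotheses enter. Recall $N_{\alpha\beta_0}=\pm(p+1)$ with $p$ the largest integer such that $\beta_0-p\alpha\in\Phi$. If $A-B\notin\Phi_P$, then $\beta_0-\alpha\notin\Phi$ (otherwise its image $\pi(\beta_0-\alpha)=B-A$ would be a relative root, as $A$ and $B$ are non-proportional, forcing $A-B\in\Phi_P$), so $p=0$; and if $\alpha,\beta_0$ lie in a simply laced component, every root string has length one, so again $p=0$. In either case $N_{\alpha\beta_0}=\pm 1$ is a unit, so $N_{AB11}(e_\alpha,u)$ has nonzero $\mathfrak g_{\alpha+\beta_0}$-component $\pm u_{\beta_0}\neq 0$, contradicting $N_{AB11}(-,u)=0$. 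I expect the principal obstacle to be the two root-combinatorial inputs—the transitivity of $W_L$ on $\pi^{-1}(B)$ (equivalently, the $L$-irreducibility of $V_B$) and the clean identification of $N_{AB11}$ with the absolute structure constants over $S$—rather than the concluding root-string computation.
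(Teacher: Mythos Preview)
Your strategy matches the paper's: reduce by bilinearity of $N_{AB11}$ to showing $N_{AB11}(-,u)\neq 0$, pass to a splitting cover, pick $\beta_0\in\pi^{-1}(B)$ with $u_{\beta_0}\neq 0$, find $\alpha\in\pi^{-1}(A)$ with $\alpha+\beta_0\in\Phi$, and check that $N_{\alpha\beta_0}=\pm 1$ under either hypothesis. Working directly with the bilinear map is even a little cleaner than the paper's group-level commutator computation: the paper also chooses $\beta_0$ of minimal height to control the Chevalley commutator formula, whereas your observation that distinct $\beta$ land in distinct root spaces $\mathfrak g_{\alpha+\beta}$ makes that unnecessary.

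The genuine gap is your existence argument for $\alpha$. The Weyl group $W_L$ of the Levi is \emph{not} in general transitive on $\pi^{-1}(B)$, and this is not equivalent to the $L$-irreducibility of $V_B$: an irreducible module that is not minuscule has several Weyl orbits of weights. Concretely, take $G$ split of type $C_3$ with Levi root system $\{\pm\alpha_1\}$, so that $\Phi_P\cong C_2$; for the highest relative root $B=2A_2+A_3$ one has $\pi^{-1}(B)=\{2e_1,\,e_1+e_2,\,2e_2\}$, which splits into the two $W_L$-orbits $\{2e_1,2e_2\}$ and $\{e_1+e_2\}$, while $A=-A_2$ satisfies $A+B\in\Phi_P$ and $A-B\notin\Phi_P$, so the lemma applies but your transport argument cannot reach $\beta_0=e_1+e_2$. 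What you actually need---that for every $\beta_0\in\pi^{-1}(B)$ there exists $\alpha\in\pi^{-1}(A)$ with $\alpha+\beta_0\in\Phi$---is precisely \cite[Lemma~4]{PS}, and that is exactly what the paper invokes at this step. With that substitution your argument goes through.
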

\begin{proof}
Assume that $N_{AB11}(e_i,u)=0$ for all $1\le i\le n$.
Consider an affine fpqc-covering $\coprod \Spec S_\tau\to \Spec R$ that splits $G$. There is a member
$S_\tau=S$ of this covering such that the image of $X_B(u)$ under $G(R)\to G(S)$ is non-trivial.
Write
\begin{equation*}
X_B(u)=\prod_{\pi(\beta)=B}x_{\beta}(a_\beta)\cdot
\prod_{i\ge 2}\prod_{\pi(\beta)=iB}x_\beta(c_\beta),
\end{equation*}
where $\pi:\Phi\to\Phi_P$ is the canonical projection of the absolute root system of $G$ onto the relative one,
$x_\beta$ are root subgroups of the split group $G_S$,
and $a_\beta\in S$. Since $X_B(u)\neq 0$, the definition of $X_B$ implies that there exists $a_\beta\neq 0$.
Let $\beta_0\in\pi^{-1}(B)$ be the root of minimal height with this property. By~\cite[Lemma 4]{PS} there exists
a root $\alpha\in\pi^{-1}(A)$ such that $\alpha+\beta_0\in\Phi$. Let $v\in V_A\otimes_R S$ be such that
$X_A(v)=x_\alpha(1)\prod\limits_{i\ge 2}\prod\limits_{\pi(\gamma)=iA}x_\gamma(d_\gamma)$, for some $d_\gamma\in S$.
Then the (usual) Chevalley commutator formula implies that $[X_A(v),X_B(u)]$ contains in its decomposition a
factor $x_{\alpha+\beta}(\lambda a_{\beta_0})$, where $\lambda\in\{\pm 1,\pm 2,\pm 3\}$. However, since either $\alpha,\beta$
belong to a simply laced irreducible component of $\Phi$, or $A-B\not\in\Phi_P$,
we have $\lambda=\pm 1$. Then $N_{AB11}(v,u)\neq 0$, a contradiction.
\end{proof}

Recall~\cite{PS} that any relative root $A\in\Phi_{J,\Gamma}$ can be represented as a (unique)
linear combination of simple relative roots.
The \emph{level} $\lev(A)$ of a relative root $A$ is the sum of coefficients in this decomposition.

\begin{lem}\label{lem:levi}
Let $R$ be a local ring with the maximal ideal $M$, and let $G$
be a reductive group over $R$. Let
$P$ be a parabolic subgroup of $G$ such that $\rank \Phi_P\ge 2$, and the type of $P$ occurs as the type
of a minimal parabolic subgroup of some reductive group over a local ring (not necessarily over $R$).
%either $\Phi_P$ is reduced, or
%$\Phi_P$ is of type $BC_n$.
Assume that $g\in G(R)$ is such that
for any $A\in\Phi_P$ there is a system of generators ${e_A}_i$, $1\le i\le n_A$, of $V_A$ such that $[g,X_A({e_A}_i)]=1$
for all $i$. If
$g\in U_P(M)L(R)U_{P^-}(M)$, then $g\in L(R)$.
\end{lem}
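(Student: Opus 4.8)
The plan is to use the hypothesis $g\in U_P(M)L(R)U_{P^-}(M)$ to fix a factorization $g=ubu'$ with $u\in U_P(M)$, $b\in L(R)$, $u'\in U_{P^-}(M)$, and to prove that the two unipotent factors are trivial, so that $g=b\in L(R)$. First I would convert the centralizing hypothesis into a condition on $u'$ alone. For a positive relative root $A\in\Phi_P$ and a generator $e=e_{A,i}$ of $V_A$, the relation $gX_A(e)g^{-1}=X_A(e)$ rearranges to $b\,u'X_A(e)u'^{-1}\,b^{-1}=u^{-1}X_A(e)u$, whose right-hand side lies in $U_P$; since $b\in L(R)$ normalizes $U_P$, this yields $u'X_A(e)u'^{-1}\in U_P$ for every positive $A$ and every $e$ in the given generating set of $V_A$. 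Granting for the moment that this forces $u'=1$, we then have $g=ub$, and for a negative relative root $C$ the same manipulation applied to $[g,X_C(e)]=1$ gives $uX_C(e'')u^{-1}\in U_{P^-}$ for a generating set; the mirror image of the argument below then gives $u=1$ as well.

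So the heart of the matter is the following claim: if $u'\in U_{P^-}$ satisfies $u'X_A(e)u'^{-1}\in U_P$ for all positive $A\in\Phi_P$ and all $e$ in a generating set of $V_A$, then $u'=1$. I would prove it by a descending induction on the level filtration of $U_{P^-}$, peeling off the deepest (most negative) surviving level. Suppose $u'\neq1$, let $-k_0$ be the most negative level at which $u'$ has a nontrivial component, and pick a root $C_0$ with $\lev(C_0)=-k_0$ whose component $v_0\in V_{C_0}$ is nonzero. The root subgroups at level $-k_0$ are central in $U_{P^-}$, so that, conjugating $X_A(e)$ by $u'$ and examining the component at a root of level $\lev(A)-k_0$, the contributions of all shallower factors of $u'$ land at strictly shallower levels and do not interfere; the only contribution at the root $A+C_0$ is $X_{A+C_0}\big(N_{A C_0 11}(e,v_0)\big)$.

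If $k_0\ge2$, I would choose a simple (level one) positive relative root $A$ that is non-proportional to $C_0$, satisfies $A+C_0\in\Phi_P$, and meets the hypotheses of Lemma~\ref{lem:ABe}; such an $A$ exists because $-C_0$ has level $k_0\ge2$ and is therefore non-simple. Then $A+C_0$ has level $1-k_0\le-1$, so it is a negative relative root, and Lemma~\ref{lem:ABe} produces a generator $e$ of $V_A$ with $N_{A C_0 11}(e,v_0)\neq0$. Hence $u'X_A(e)u'^{-1}$ has a nonzero component in the root subgroup $X_{A+C_0}\subset U_{P^-}$, contradicting its membership in $U_P$. If instead $k_0=1$, then $u'$ is supported entirely in level $-1$, and for any non-proportional positive $A$ the sum $A+C_0$ has level $0$, which is not a relative root when $P$ is of minimal parabolic type; I would therefore use the opposite root $A=-C_0$ and the rank-one ($\mathrm{SL}_2$-type) computation, showing that $[X_{-C_0}(e),X_{C_0}(v_0)]$ has a nonzero component in $L$ for a suitable generator $e$, so that $u'X_{-C_0}(e)u'^{-1}$ has a nonzero $L$-part and again cannot lie in $U_P$. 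In either case the level-$(-k_0)$ component of $u'$ must vanish, contrary to its choice; hence $u'=1$, and by the mirror argument $u=1$, giving $g=b\in L(R)$.

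I expect the main obstacle to be the root-combinatorial bookkeeping in the inductive step: guaranteeing, for the chosen deepest root $C_0$, a non-proportional partner $A$ with $A+C_0\in\Phi_P$ of negative level and satisfying the simply-laced or $A-C_0\notin\Phi_P$ hypothesis of Lemma~\ref{lem:ABe}, together with the verification that no other commutator terms contribute to the root subgroup $X_{A+C_0}$ (the delicate point being the proportional roots that appear in non-reduced relative systems). This is precisely where the hypotheses enter: $\rank\Phi_P\ge2$ supplies a second independent direction so that a non-proportional partner is always available, while the assumption that the type of $P$ is that of a minimal parabolic guarantees that $\Phi_P$ is an honest irreducible relative root system without level-zero roots, so that the level analysis and the concluding rank-one computation behave exactly as in the split case.
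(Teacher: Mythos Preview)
Your initial reduction---rewriting $[g,X_A(e)]=1$ for positive $A$ as $u'X_A(e)u'^{-1}\in U_P$---is correct and is in fact a cleaner starting point than the paper's direct manipulation of $[X_B(e),g]$. The paper keeps the full factorization $g=xhy$, picks a root $A$ of \emph{minimal} absolute level in the support of $x$ or $y$, chooses a simple or negative-simple $B$ with $A+B\in\Phi_P$ and (outside $G_2$) $A-B\notin\Phi_P$, and tracks the $X_{A+B}$-factor of $[X_B(e),g]$. So your strategy is genuinely different: you peel off the \emph{deepest} level of $u'$, the paper peels off the \emph{shallowest} level of $x,y$.

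That choice, however, creates two real gaps.

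\textbf{The $k_0=1$ step.} Here $u'$ is a product of negative-simple root elements and you fall back on ``$A=-C_0$ and an $\mathrm{SL}_2$-type computation''. In the relative setting $V_{\pm C_0}$ need not be one-dimensional and the subgroup $\langle X_{\pm C_0},X_{\pm 2C_0}\rangle$ is not $\mathrm{SL}_2$; you would have to prove that for every $0\neq v_0\in V_{C_0}$ some generator $e\in V_{-C_0}$ makes the $L$- or $U_{P^-}$-component of $X_{C_0}(v_0)X_{-C_0}(e)X_{C_0}(-v_0)$ nontrivial, and no lemma in the paper supplies this. Worse, the other level-$(-1)$ factors of $u'$ do \emph{not} commute with $X_{-C_0}(e)$ in general: their iterated commutators can land in $L$ (level $0$), so the computation is not confined to a single rank-one subgroup as you assume. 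The paper sidesteps this entirely: working from the shallowest level, a (negative-)simple $A$ always has a (negative-)simple neighbour $B$ with $A+B\in\Phi_P$ of level $\pm 2$, so a rank-one argument is never needed.

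\textbf{The $G_2$ obstruction.} You assert that a simple $A$ with $A+C_0\in\Phi_P$ and meeting the hypothesis of Lemma~\ref{lem:ABe} always exists. It does not. In $\Phi_P$ of type $G_2$ with $C_0=-(2\alpha+\beta)$ (level $-3$): for $A=\alpha$ one has $A+C_0=-(\alpha+\beta)\in\Phi_P$ but $A-C_0=3\alpha+\beta\in\Phi_P$; for $A=\beta$ one has $A+C_0=-2\alpha\notin\Phi_P$; and $A=\alpha+\beta$ gives $A-C_0=3\alpha+2\beta\in\Phi_P$. No partner works, and the absolute component need not be simply laced. The paper meets the same obstruction (from the shallow side) and devotes a separate argument to the case where both $\Phi_P$ and the relevant component of $\Phi$ are of type $G_2$, reducing via Weil restriction to the split $G_2$ and invoking the computation of Abe--Hurley. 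Your proposal omits this case altogether.
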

\begin{proof}
Write $g=xhy$, where $x\in U_P(M)$, $h\in L(R)$, $y\in U_{P^-}(M)$. We have
$\prod_{A\in\Phi_P^+} X_A(u_A)$, $y=\prod_{A\in\Phi_P^-} X_A(u_A)$, where the product is taken in any fixed order.

%We will show that $u_A=0$ for all
%$A\in\Phi_P$ by induction with respect to any partial order respecting $|\lev(A)|$, with the only additional
%requirement that if $\Phi_P$ is of type $BC_n$, than the multiples of the extra-short simple root immediately follow
%the simple roots.

Let $A\in\Phi_P$ be such that $u_A\neq 0$, and $|\lev(A)|$ is minimal among the levels of relative roots with this property.
We are going to deduce a contradiction, thus showing that $A$ cannot occur in the decomposition of $g$.
%In this way we will eliminate all factors except
%If $|lev(A)|=2$, check if

Assume that $A\in\Phi_P^+$; the other case is treated symmetrically. Since the type of $P$ coincides with the type
of a minimal parabolic subgroup, $\Phi_P$ is isomorphic to a root system
as a set with two partially defined operations---addition and multiplication by integers. Then the standard properties
of a root system imply that one can find a simple root or a minus simple root $B\in\Phi_P$, non-proportional to $A$, such that
$A+B\in\Phi_P$.
Moreover, if the irreducible component of $\Phi_P$ containing $A$ is not of type $G_2$, we can, and we will, choose
$B$ so that
$A-B\not\in\Phi_P$. If it is of type $G_2$, this may be impossible; then
we stipulate that we take $B$ positive. The classification of Tits indices over local rings~\cite{PS-tind} also implies
that in this case the respective irreducible component of the absolute root system of $G$ is either
simply laced or itself of type $G_2$. Assume for now that the latter does not take place;
we will treat this exceptional case in the very end of this proof. Then by Lemma~\ref{lem:ABe} one can find
an element $e$ of a generating system of $V_B$ centralized by $g$ such that $N_{AB11}(u_A,e)\neq 0$.

We have
$1=[X_B(e),g]=[X_B(e),x](x[X_B(e),hy]x^{-1})$. This is equivalent to
\begin{equation}\label{eq:1=}
1=(x^{-1}[X_B(e),x]x)[X_B(e),hy]=[x^{-1},X_B(e)][X_B(e),hy].
\end{equation}
By~\cite[Th. 2]{PS} we can write
$$
x^{-1}=X_A(-u_A)\prod_{
\begin{array}{c}
\st C\in\Phi_P^+,\ C\neq A,\\
\st \lev(C)\ge\lev(A)
\end{array}} X_C(v_C)=X_A(-u_A)\cdot x_1,
$$
and thus
\begin{equation}\label{eq:firstfactor}
\begin{array}{l}
[x^{-1},X_B(e)]=[X_A(-u_A)x_1,X_B(e)]\\
=[X_A(-u_A),[x_1,X_B(e)]]\cdot [x_1,X_B(e)]\cdot [X_A(-u_A),X_B(e)].
\end{array}
\end{equation}

{\bf Case 1: $B$ is positive, that is, $B$ is a simple root.}
We study the factor $[X_B(e),hy]$ of~\eqref{eq:1=}. Write $[X_B(e),hy]=X_B(e)h(yX_B(e)^{-1}y^{-1})h^{-1}$, and
$$
y=\prod_{C\in\Phi_P^-,\ C\not{\,\|} B} X_C(v_c)\cdot \prod_{i>0} X_{-iB}(v_{-iB})=y_1y_2.
$$
Using Lemma~\ref{lem:A-A} we obtain
$yX_B(e)^{-1}=y_1(y_2\cdot X_B(e)^{-1})=y_1\cdot\prod\limits_{i>0}X_{iB}(w_{iB})\cdot b\cdot \prod\limits_{i>0}X_{-iB}(w_{iB})$,
where $b\in L(R)$.
Since  relative roots proportional to $B$ does not occur in the decomposition of $y_1$, and $B$ is a simple root,
the generalized Chevalley commutator formula implies that
$y_1\cdot\prod\limits_{i>0}X_{iB}(w_{iB})=\Bigl(\prod\limits_{i>0}X_{iB}(w_{iB})\Bigr) y_3$, where $y_3\in U_{P^-}(R)$.
Hence $yX_B(e)^{-1}\in \Bigl(\prod\limits_{i>0}X_{iB}(w_{iB})\Bigr)P^{-}(R)$, and also
$$
[X_B(e),hy]\in X_B(e)h\Bigl(\prod\limits_{i>0}X_{iB}(w_{iB})\Bigr)h^{-1}P^-(R)=
\Bigl(\prod\limits_{i>0}X_{iB}(z_{iB})\Bigr)P^-(R).
$$

Now we consider the first factor $[x^{-1},X_B(e)]$ of the right side of~\eqref{eq:1=}. The generalized Chevalley commutator
formula, applied to~\eqref{eq:firstfactor}, says that
$$
[x^{-1},X_B(e)]=\prod\limits_{D\in\Phi_P^+} X_D(w_D).
$$
Moreover, $D=A+B$ is a root of minimal height  in the decomposition~\eqref{eq:firstfactor}
satisfying $w_D\neq 0$; in fact, $w_{A+B}=N_{AB11}(-u_A,e)$. Hence, the whole product
$$
[x^{-1},X_B(e)]\cdot [X_B(e),hy]\in X_{A+B}(N_{AB11}(-u_A,e))\cdot\Bigl(\prod\limits_{i>0}X_{iB}(z_{iB})\Bigr)
\cdot \hspace{-30pt}
\prod_{
\begin{array}{c}
\st C\in\Phi_P^+,\\
\st \lev(C)>\lev(A+B)
\end{array}}\hspace{-30pt} X_C(t_C)
\cdot P^-(R)
$$
does not equal $1$, a contradiction.

{\bf Case 2: $B$ is negative, that is $B'=-B$ is a simple root.}
In this case the generalized Chevalley commutator formula immediately implies $[X_B(e),hy]\in P^-(R)$.
We study~\eqref{eq:firstfactor}. Note that the decomposition of $x_1$ does not contain
$X_{B'}(v_{B'})$, and, if $2B'\in\Phi_P$, also does not contain $X_{2B'}(v_{2B'})$. Indeed,
in the first case we would have $\lev(A)=1$, hence $A$ is a simple relative root, hence $A+B=A-B'$ is not a relative root.
In the second case we would have $\lev(A)=2$, and, since $A+B\in\Phi_P$, $A=A'+B'$ for a simple relative root $A'$.
Since in this case we are in the irreducible component of $\Phi_P$ of type $BC_n$, and $B'$ is an extra-short simple root,
we also have $A'+2B'=A-B\in\Phi_P$. But then by our algorithm we would have taken
$(-A')$ instead of $B$, since $A-(-A')=2A'+B'\not\in\Phi_P$.

The above, together with the fact that $B'=-B$ is a simple root, and the generalized Chevalley commutator formula,
implies that $[x_1,X_B(e)]=\prod\limits_{D\in\Phi_P^+} X_D(w_D)$. Moreover, if $w_D\neq 0$, then
$D\neq A+B$, since $A-B$ is not a relative root by our assumptions, and obviously $D$ is not proportional to $B$.
Further, we see that
for any relative root $D$, occuring in the decomposition of $[X_A(-u_A),[x_1,X_B(e)]]$
or $[X_A(-u_A),X_B(e)]$, the coefficient near any simple root $A_0\neq B'$ in the decomposition of $D$ is greater
or equal to that in the decomposition of $A$. Summing up, the only factor
of the form $X_{A-B}(u)$ in the decompositions of the expressions
$[X_A(-u_A),[x_1,X_B(e)]]$, $[x_1,X_B(e)]$, $[X_A(-u_A),X_B(e)]$ is the factor $X_{A-B}(N_{AB11}(-u_A,e))$ in the third one,
and no commutator of the factors can give a new factor of the form $X_{A-B}(u)$ with $u\neq 0$. Hence,
$[x^{-1},X_B(e)]$ contains $X_{A-B}(N_{AB11}(-u_A,e))\neq 1$ in its decomposition, and
$$
[x^{-1},X_B(e)][X_B(e),hy]\in X_{A-B}(N_{AB11}(-u_A,e))\cdot\hspace{-10pt}\prod_{
\begin{array}{c}
\st F\in\Phi_P^+,\\
\st F\neq A-B\end{array}}\hspace{-10pt}X_F(t_F)\cdot P^-(R)
$$
cannot equal $1$, a contradiction.

{\bf Case $G_2$.} We are left with the case when $\Phi_P$ is of type $G_2$, and moreover the relevant component of the
absolute root system of $G$ is also of type $G_2$. Then we can assume without loss of generality that all components
of the absolute root system are of type $G_2$, and consequently $G$ is quasi-split. There exists a canonical \'etale
extension $R'$ of $R$ such that $G$ is a Weil restriction of a split group $G'$ of type $G_2$ over $R'$,
see~\cite[Exp.~XXIV Prop.~5.9]{SGA}. Then $G_{R'}$ is a direct product of $k$ split groups $G_i$ of type $G_2$. To show that
$g\in L(R)$, it is enough to show that the image $g'$ of $g$ in $G(R')$ is in $L(R')$. We know that
$P_{R'}$ is a Borel subgroup of $G_{R'}$, and, since $\Phi_P$ has no multiple roots,
for any $A\in\Phi_P$ we can identify the root subscheme $X_A(V_A\otimes R')$ with the direct product
of $k$ elementary root subgroups $x_\alpha(R')$ of the groups $G_i$. Considering the relevant projections of $g$
and the generating systems of $V_A$, we are reduced to proving the following: if a point $h\in H(S)$
of a split reductive group $H$ of type $G_2$ centralizes $x_\alpha(u_\alpha)$ for some $u_\alpha\in S^\times$,
for any root $\alpha\in\Psi$, where $\Psi$ is the root system of $H$, then $h$ belongs to the
corresponding split maximal torus. By Lemmas~\ref{lem:closedpt} and~\ref{lem:omega} we can also assume that
the ring $S$ is local with the maximal ideal $N$, and $h=\prod_{\alpha\in\Psi^+}x_\alpha(a_\alpha)\cdot h\cdot \prod_{\alpha\in\Psi^-}x_\alpha(a_\alpha)$,
where all $a_\alpha\in N$. Then the proof goes exactly as in~\cite[Prop. 3]{AbeHur}, substituting
the elements $x_\beta(1)$ and $w_\beta(1)$ by $x_\beta(u_\beta)$ and $w_\beta(u_\beta)=x_\beta(u_\beta)x_{-\beta}(-u_\beta^{-1})x_\beta(u_\beta)$.
\end{proof}

\begin{lem}\label{lem:levigood}
Let $G$ be an isotropic reductive algebraic group over a commutative ring $R$, $P$ a parabolic subgroup of $G$,
$L$ a Levi subgroup of $P$.
Assume that $g\in G(R)$ is such that
for any $A\in\Phi_P$ there is a system of generators ${e_A}_i$, $1\le i\le n_A$, of $V_A$ such that $[g,X_A({e_A}_i)]=1$
for all $i$. If $g\in L(R)$, then $[g,E_P(R)]=1$.
\end{lem}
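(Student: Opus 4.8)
The plan is to reduce the statement to the single nontrivial ingredient: the linearity of the conjugation action of the Levi subgroup on the relative root modules. Since $E_P(R)$ is by definition generated by $U_P(R)$ and $U_{P^-}(R)$, and each of these is generated by the elements $X_A(v)$ with $A\in\Phi_P^{\pm}$ and $v\in V_A$ (the unipotent radical $U_P$ being, as a scheme, a product of the relative root subschemes $X_A(V_A)$), it suffices to prove that $[g,X_A(v)]=1$ for every $A\in\Phi_P$ and every $v\in V_A$. Indeed, the centralizer of a subset is a subgroup, so an element centralizing a generating set of a group centralizes the whole group.

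The key point, which I expect to be the main obstacle, is the correct invocation of the following fact from the preliminaries on relative root subschemes (see~\cite{LS},~\cite{PS}): the Levi subgroup $L$ acts $R$-linearly on each module $V_A$ through a representation $\rho_A\colon L\to\GL(V_A)$, and this action is realized by conjugation inside $G$, so that for $g\in L(R)$ and $v\in V_A$ one has $gX_A(v)g^{-1}=X_A(\rho_A(g)v)$. I would first record this equivariance carefully, since the whole argument rests on it; the relative root subschemes are set up precisely so that each $X_A(V_A)$ is $L$-stable and conjugation by $L(R)$ induces the linear action $\rho_A$ on the parameter module $V_A$, with the higher-level corrections being determined accordingly. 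The hypothesis $g\in L(R)$ is exactly what is needed to make conjugation a single $R$-linear map per root direction rather than a more complicated scheme automorphism.

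Granting this, the remainder is purely formal. Fix $A\in\Phi_P$. Because $X_A$ is a closed immersion of schemes, the induced map $V_A\to G(R)$ on $R$-points is injective, so the hypothesis $[g,X_A({e_A}_i)]=1$ reads $X_A(\rho_A(g){e_A}_i)=X_A({e_A}_i)$, whence $\rho_A(g){e_A}_i={e_A}_i$ for each $i$. Thus the $R$-linear endomorphism $\rho_A(g)$ of $V_A$ fixes the generating system $\{{e_A}_i\}$ of the $R$-module $V_A$, and therefore $\rho_A(g)=\mathrm{id}_{V_A}$. Consequently $gX_A(v)g^{-1}=X_A(\rho_A(g)v)=X_A(v)$, i.e. $[g,X_A(v)]=1$, for all $v\in V_A$. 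Since this holds for every $A\in\Phi_P$, the element $g$ centralizes all the generators $X_A(v)$ of $E_P(R)$, and hence $[g,E_P(R)]=1$, as required. Once the $L$-equivariance of the second paragraph is in place, the passage from a generating system to all of $V_A$ is immediate from linearity, so that step is the only place where genuine care is needed.
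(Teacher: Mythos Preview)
Your argument rests on the claim that for $g\in L(R)$ one has $gX_A(v)g^{-1}=X_A(\rho_A(g)v)$ with $\rho_A(g)$ an $R$-linear automorphism of $V_A$. This is not what the references give, and it is false in general. By \cite[Th.~2]{PS} the correct formula is
\[
gX_A(v)g^{-1}=\prod_{i\ge 1}X_{iA}\bigl(\varphi^i_{g,A}(v)\bigr),
\]
where $\varphi^i_{g,A}:V_A\to V_{iA}$ is homogeneous of degree $i$. When $2A\in\Phi_P$ (as happens in type $BC_n$) the terms with $i\ge 2$ need not vanish, so conjugation by $L$ does not act linearly on $V_A$ through $X_A$. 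Your deduction then breaks in two places. First, from $[g,X_A(e_{A,i})]=1$ you can conclude $\varphi^1_{g,A}(e_{A,i})=e_{A,i}$ and $\varphi^j_{g,A}(e_{A,i})=0$ for $j\ge 2$, and linearity of $\varphi^1$ indeed forces $\varphi^1_{g,A}=\mathrm{id}$; but a homogeneous map of degree $j\ge 2$ that vanishes on a generating set need not vanish on all of $V_A$ (think of a quadratic form with $q(e_i)=0$ but $q(e_i+e_j)\neq 0$). Second, even granting $\varphi^1=\mathrm{id}$, the conclusion $gX_A(v)g^{-1}=X_A(v)$ does not follow without controlling the higher $\varphi^j$.

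The paper's proof deals with exactly this difficulty by descending induction on $|\lev(A)|$: for roots of maximal level there are no multiples and your linear argument applies, while for smaller $A$ one uses the homogeneity of the $\varphi^i_{g,A}$ to get closure under scalars, and the product formula $X_A(v)X_A(w)=X_A(v+w)\prod_{i>1}X_{iA}(q^i_A(v,w))$ together with the inductive hypothesis (that $g$ already centralizes $X_{iA}(V_{iA})$ for $i>1$) to get closure under sums. That is the missing idea in your attempt; if $\Phi_P$ has no multipliable roots your proof is fine, but as stated the lemma covers the $BC_n$ case and there your purely linear reduction does not suffice.
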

\begin{proof}
We show that $[g,X_A(V_A)]=0$ for any $A\in\Phi_P^+$ by descending induction on the hight of $A$;
the case $A\in\Phi_P^-$ is symmetric.
By~\cite[Th. 2]{PS} for any $g\in L(S)$ and any $A\in\Phi_P$ there exists a set of homogeneous polynomial maps
$\varphi^i_{g,A}: V_A\to V_{iA}$, $i\ge 1$, such that for any $v\in V_A$ one has
$$
gX_A(v)g^{-1}=\prod_{i\ge 1}X_{iA}(\varphi^i_{g,A}(v)).
$$
Since $\varphi^i_{g,A}$ are homogeneous, $[g,X_A(v)]=1$ for $v\in V_A$ implies $[g,X_A(\lambda v)]=1$ for any $\lambda\in R$.
Also by~\cite[Th. 2]{PS}, there exist a set of homogeneous polynomial maps
$q^i_A:V_A\times V_A \to V_{iA}$, $i>1$, such that
$$
X_A(v)X_A(w)=X_A(v+w)\prod_{i>1}X_{iA}(q^i_A(v,w))
$$
for all $v,w\in V_A$. Assume that $[g,X_A(v)]=[g,X_A(w)]=1$. Then
$$
g X_A(v+w) g^{-1}=gX_A(v)X_A(w)g^{-1}\cdot g\Bigl(\prod_{i>1}X_{iA}(q^i_A(v,w))\Bigr)^{-1}g^{-1}
%=g(\prod_{i>1}X_{iA}(q^i_A(v,w)))^{-1}g^{-1}
=1,
$$
since by inductive hypothesis $g$ centralizes $X_{iA}(V_{iA})$ for all $i>0$.
\end{proof}

\section{The proof}

\begin{proof}[Proof of Theorem~\ref{th:main}]
Let $g\in G(R)$ centralize $E(R)=E_Q(R)$, where $Q$ a strictly proper parabolic subgroup of $G$. We are going to show
that $g\in\Cent(G)(R)$.
By Lemma~\ref{lem:closedpt} it is enough to show that $g\in\Cent(G)(R_M)$ for any maximal ideal $M$ of $R$.
Fix an ideal $M$, and set $R'=R_M$. Let $P$ be a minimal parabolic subgroup of $G_{R'}$. By
Lemma~\ref{lem:R_P} for any $A\in\Phi_{P}$
there is a system of generators ${e_A}_i$, $1\le i\le n_A$, of the $R'$-module $V_A$ such that
one has $[g,X_A({e_A}_i)]=1$, $1\le i\le n_A$. Note that $\Phi_P$ is a root system
by~\cite[Exp. XXVI, \S 7]{SGA}, and by the assumption of the theorem all irreducible components of $\Phi_P$
are of rank $\ge 2$.

%Since $\Cent(G)(R)=\ker(G(R)\to G^{\ad}(R))$ and this map is surjective on elementary subgroups,
%it is enough to check that $g\in \Cent(G)(R)=\{1\}$ for $G=G^{\ad}$.
Let $\coprod\Spec S_\tau\to\Spec R'$ be an fpqc-covering such that $G$ splits over each $\Spec S_\tau$.
It is enough to check that $g\in\Cent(G)(S_\tau)$ for every $\tau$ (here we identify $g$ with its image
under $G(R')\to G(S_\tau)$). Fix one $\tau$, and set $S=S_\tau$ for short.
Again by Lemma~\ref{lem:closedpt} it is enough to show that $g\in\Cent(G)(S_N)$ for any maximal ideal $N$ of $S$.

Since a system of generators ${e_A}_i$, $1\le i\le n_A$, of the $R'$-module $V_A$, also
generates $(V_A\otimes_{R'} S)\otimes_S S_N$ as an $S_N$-module,
$g$ satisfies the conditions of Lemmas~\ref{lem:omega} and~\ref{lem:levi} (for the base ring $S_N$);
%moreover, $g$ centralizes $X_A({v_A}_i)$ for some set of generators ${v_A}_i$ of $V_A$, for any
%$A\in\Phi_P$.
hence $g\in L(S_N)$, where $L$ is a Levi subgroup of $P$.
By Lemma~\ref{lem:levigood} this implies that $g$ centralizes $E(S_N)$. Since $G_{S_N}$ is split, it has
a Borel subgroup $B$, and $E(S_N)=E_B(S_N)$. Applying Lemmas~\ref{lem:omega} and~\ref{lem:levi} to $B$ instead of $P$,
we get that $g\in T(S_N)$ for a split maximal subtorus $T$ of $G_{S_N}$. Hence $g\in \Hom(\Lambda/\Lambda_r,S_N)\subseteq
\Hom(\Lambda,S_N)=T(S_N)$, where $\Lambda$ is the weight lattice of $G$, and $\Lambda_r$ is the root sublattice.
Therefore, $g\in\Cent(G)(S_N)$.
\end{proof}

%g\in C_{G(R)}(E(R)); тогда над R_M g централзует некоторую систему образующих для любого V_A
%Док. тогда g из \Cent(G)(R_M) (=> по инъективности локализаций g лежит в \Cent(G)(R), идеалы Хопфа).
%Рассмотрим R_M -> R_M/MR_M=k. Над k централизует систему образующих.

%Док. над k лежит в Леви => над локальным кольцом централизует всю элементарную, редукция к случаю локального кольца

\end{document}